\numberwithin{equation}{section}
\newtheorem{theorem}{Theorem}[section]
\newtheorem{corollary}{Corollary}[section]
\newtheorem{definition}{Definition}[section]
\newtheorem{lemma}{Lemma}[section]
\theoremstyle{remark}
\title[Radii of convexity of integral operators]
 {Radii of convexity of integral operators}
\subjclass[2010]{30C45}
\keywords{Radii of Convexity; Starlike; Convex; Locally Convex; Integral Operators; Subordination.}
\begin{document}
\begin{abstract}
The object of the present paper is to study of radius of convexity two certain integral operators as follows
\begin{equation*}
  F(z):=\int_{0}^{z}\prod_{i=1}^{n}\left(f'_i(t)\right)^{\gamma_i}{\rm d}t
\end{equation*}
and
\begin{equation*}
  J(z):=\int_{0}^{z}\prod_{i=1}^{n}\left(f'_i(t)\right)^{\gamma_i}\prod_{j=1}^{m}
  \left(\frac{g_j(z)}{z}\right)^{\lambda_j}{\rm d}t,
\end{equation*}
where $\gamma_i, \lambda_i\in\mathbb{C}$, $f_i$ $(1\leq i\leq n)$ and $g_j$ $(1\leq j\leq m)$ belong to the certain subclass of analytic functions.
\end{abstract}
\author[P. Najmadi, Sh. Najafzadeh and A. Ebadian]
       {P. Najmadi, Sh. Najafzadeh and A. Ebadian}

\address{Department of Mathematics, Payame Noor University, Tehran, Iran}
       \email{najmadi@phd.pnu.ac.ir {\rm (P. Najmadi)}}
       \email{najafzadeh1234@yahoo.ie {\rm (Sh. Najafzadeh)}}
       \email{aebadian@pnu.ac.ir {\rm (A. Ebadian)}}
\maketitle
\section{Introduction}
\label{intro}

Let $\Delta:=\{z\in \mathbb{C}: |z|<1\}$ and let $\mathcal{H}(\Delta)$ be the set of all functions analytic in $\Delta$ and let
\begin{equation*}
  \mathcal{A}_p:=\{f\in \mathcal{H}(\Delta): f(z)=z^p+a_{p+1}z^{p+1}+\cdots\},
\end{equation*}
for all $z\in \Delta$ and $p\in \mathbb{N}:=\{1,2,\ldots\}$ with $\mathcal{A}_1\equiv \mathcal{A}$. Also let $\mathcal{S}$ be the subclass of $\mathcal{A}$ consisting of univalent functions.

If $f$ and $g$ are two of the functions in $\mathcal{A}$, we say
that $f$ is subordinate to $g$, written $f (z)\prec g(z)$, if there
exists a Schwarz function $w$ such that $f (z) = g(w(z))$, for all
$z\in\Delta$.
Furthermore, if the function $g$ is univalent in $\Delta$, then
we have the following equivalence:
\begin{equation*}
    f (z)\prec g(z) \Leftrightarrow (f (0) = g(0)\quad {\rm and}\quad f (\Delta)\subset g(\Delta)).
\end{equation*}

Let $\mathcal{S}^*(\varphi)$ denote the class of functions $f$ in $\mathcal{S}$
satisfying
\begin{equation*}
  \frac{zf'(z)}{f(z)}\prec \varphi(z)\qquad (z\in \Delta),
\end{equation*}
where $\varphi$ is an analytic function with positive real part on $\Delta$, $\varphi(0)=1$,
$\varphi'(0) > 0$ and $\varphi$ maps $\Delta$ onto a region starlike with respect to 1 and symmetric
with respect to the real axis. The class $\mathcal{S}^*(\varphi)$ was introduced by Ma and Minda (see \cite{maminda}). We note that the class $\mathcal{S}^*(\alpha)$ consisting of starlike functions of order $\alpha$ and the class $\mathcal{S}^*[A,B]$ of Janowski starlike functions are special cases of $\mathcal{S}^*(\varphi)$ where $\varphi(z)=(1+(1-2\xi)z)/(1-z)$ ($0\leq \xi<1$) and $\varphi(z)=(1+Az)/(1+Bz)$ ($-1\leq B<A\leq1$), respectively.

We also use the following well known notations:
\begin{equation*}
  \mathcal{S}^*_p(\xi):=\left\{f\in \mathcal{A}_p: \mathfrak{Re} \left(\frac{zf'(z)}{f(z)}\right)>\xi, \ \ 0\leq \xi<p, |z|<1\right\},
\end{equation*}
for the functions of p-valent starlike of order $\xi$ and
\begin{equation*}
  \mathcal{K}_p(\xi):=\left\{f\in \mathcal{A}_p: \mathfrak{Re} \left(1+\frac{zf''(z)}{f'(z)}\right)>\xi, \ \ 0\leq \xi<p, |z|<1\right\},
\end{equation*}
for the functions of p-valent convex of order $\xi$. We put $\mathcal{S}^*_1(\xi)\equiv \mathcal{S}^*(\xi)$ and $\mathcal{K}_1(\xi)\equiv \mathcal{K}(\xi)$, the class of starlike and convex functions of order $\xi$, respectively.

Following \cite{OPW}, for $\beta \in \mathbb{R}$ we consider the class
$\mathcal{G}\left(\beta\right)$ consisting of locally univalent functions
$f\in \mathcal{A}$ which satisfy the condition
\begin{equation}
\mathfrak{Re}\left( 1+\frac{zf^{\prime \prime }(z)}{f^{\prime }(z)}\right)
<1+\frac{\beta}{2} \qquad z\in \Delta.
\end{equation}
In \cite{SO}, Ozaki introduced the class $\mathcal{G}\equiv \mathcal{G}(1)$
and proved that functions in $\mathcal{G}$ are univalent in $\Delta $. In
\cite{TU}, Umezawa generalized Ozaki's result for a version of the class $\mathcal{G}$ (convex functions in one direction). It is also known that the
functions in the class $\mathcal{G}\left( 1\right) $ are starlike in $\Delta$. The function class $\mathcal{G}\left(\beta\right)$ was studied extensively by Kargar et al. \cite{KargarJMAA}.

For $a\in\Delta$, let $Aut(\Delta)$ be the set of all automorphisms $\phi(z)=e^{i\theta}\dfrac{z+a}{1+\bar{a}z}$, where $\theta\in\mathbb{R}$. Following \cite{Pom}, we recall a definition:
\begin{definition}\label{def M}
 A subclass $\mathcal{M}\subset\mathcal{A}$ is said to be a linear invariant family if:

(i) every $f\in\mathcal{M}$ is locally univalent in $\Delta$ and

(ii) for any $f\in\mathcal{M}$ and $\phi\in Aut(\Delta)$ we have
\begin{equation}\label{F}
F_{\phi}(f)(z)=\dfrac{f(\phi(z))-f(\phi(0))}{f^{\prime}(\phi(0))\phi^{\prime}(0)}\in\mathcal{M}.
\end{equation}
\end{definition}
Define also the order of $f\in\mathcal{A}$ as $ord$ $f=\sup_{\phi\in Aut(\Delta)}|a_{2}(F_{\phi})|$ and the universal linear invariant family of order $\alpha\geq1$ as
\begin{equation*}
\mathcal{U}_{\alpha}=\{f\in\mathcal{A}:~ord~f\leq\alpha\}.
\end{equation*}
It is well known that $\mathcal{U}_{1}=\mathcal{K}$, whereas $\mathcal{S}\subset\mathcal{U}_{2}$. The following lemma will be required.
\begin{lemma}\label{lem Ebadian, Kargar}{\rm (}see \cite{ebakar2017}{\rm )}
If $f\in\mathcal{U}_{\alpha}$ and $\alpha\geq1$, then
\begin{equation*}
\left|\dfrac{zf^{\prime\prime}(z)}{f^{\prime}(z)}-\dfrac{2|z|^{2}}{1-|z|^{2}}\right|
\leq\dfrac{2\alpha|z|}{1-|z|^{2}}\qquad(z\in\Delta).
\end{equation*}
\end{lemma}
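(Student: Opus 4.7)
The plan is to exploit the definition of the universal linear invariant family directly: the lemma is essentially a pointwise translation of the defining bound $|a_2(F_\phi)|\le\alpha$. Fix an arbitrary $z_0\in\Delta$ and choose the automorphism $\phi(\zeta)=(\zeta+z_0)/(1+\bar{z_0}\zeta)$, so that $\phi(0)=z_0$. Because $f\in\mathcal{U}_\alpha$ and $\mathcal{U}_\alpha$ is linear invariant, the transformed function $F_\phi(f)$ defined in \eqref{F} again lies in $\mathcal{U}_\alpha$, and in particular the absolute value of its second Taylor coefficient at the origin is bounded by $\alpha$.

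The core of the argument is therefore the explicit computation of that second coefficient in terms of $f$ and $z_0$. First I would record the two pieces of data needed from $\phi$ at $\zeta=0$, namely $\phi'(0)=1-|z_0|^2$ and $\phi''(0)=-2\bar{z_0}(1-|z_0|^2)$. Then, differentiating
\begin{equation*}
F_\phi(f)(\zeta)=\frac{f(\phi(\zeta))-f(z_0)}{f'(z_0)(1-|z_0|^2)}
\end{equation*}
twice and evaluating at $\zeta=0$, the chain rule gives
\begin{equation*}
2\,a_2(F_\phi)=F_\phi(f)''(0)=\frac{(1-|z_0|^2)f''(z_0)}{f'(z_0)}-2\bar{z_0}.
\end{equation*}

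Once this identity is in hand, the rest is algebra. From $|a_2(F_\phi)|\le\mathrm{ord}\,f\le\alpha$ I get
\begin{equation*}
\left|\frac{(1-|z_0|^2)f''(z_0)}{2f'(z_0)}-\bar{z_0}\right|\le\alpha,
\end{equation*}
and multiplying through by $2|z_0|/(1-|z_0|^2)$, and using $z_0\bar{z_0}=|z_0|^2$ to turn the term $\bar{z_0}\cdot z_0$ into the real quantity appearing in the statement, delivers
\begin{equation*}
\left|\frac{z_0 f''(z_0)}{f'(z_0)}-\frac{2|z_0|^2}{1-|z_0|^2}\right|\le\frac{2\alpha|z_0|}{1-|z_0|^2}.
\end{equation*}
Since $z_0\in\Delta$ was arbitrary, this is the claimed bound.

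The only delicate step is the coefficient computation, and it is delicate only in bookkeeping: one has to differentiate the quotient defining $F_\phi(f)$ carefully and correctly combine $\phi'(0)^2$ with $\phi''(0)$, because a sign or factor error here propagates into a wrong constant in the final inequality. There is no real analytic obstacle; every inequality used is the defining inequality of $\mathcal{U}_\alpha$, and linear invariance does the rest.
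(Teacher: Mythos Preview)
The paper does not actually prove this lemma: it is quoted from \cite{ebakar2017} without argument, so there is no in-paper proof to compare against. Your proof is correct and is in fact the classical derivation of this inequality for linear invariant families, going back to Pommerenke \cite{Pom}: one reads off $a_2(F_\phi)$ for the disc automorphism sending $0$ to $z_0$ and then unwinds the bound $|a_2|\le\alpha$. One cosmetic point: when you write ``multiplying through by $2|z_0|/(1-|z_0|^2)$'' you really mean multiplying inside the modulus by the complex scalar $2z_0/(1-|z_0|^2)$ (whose absolute value is $2|z_0|/(1-|z_0|^2)$); that is what produces the product $z_0\bar z_0=|z_0|^2$ you invoke in the next clause. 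With that wording tightened, the argument is complete.
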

Over the years, study on the integral operators have been investigated by many authors, including (for example) the following cases.

In \cite{silverman1975}, Silverman obtained the order of starlikeness of functions given by
\begin{equation*}
  z\prod_{i=1}^{m}\left(\frac{f_i(z)}{z}\right)^{a_i}\prod_{i=1}^{n}\left(g'_i(z)\right)^{b_i},
\end{equation*}
where $f\in \mathcal{S}^*(\alpha)$, $g\in \mathcal{K}(\alpha)$ and $a_i,b_i\geq0$. Also, Dimkov (see \cite{dimkov1991}) studied the operator
\begin{equation*}
  z\prod_{i=1}^{m}\left(\frac{f_i(z)}{z}\right)^{a_i}
\end{equation*}
and found the radii of starlikeness and convexity as well as orders of starlikeness and convexity. Again, Dimkov and Dziok \cite{dimdzi} considered the functions of the type
\begin{equation}\label{DimDzi}
  z^p\prod_{i=1}^{m}\left(\frac{f_i(z)}{z^p}\right)^{a_i},
\end{equation}
where $f_i\in \mathcal{S}^*_p(\alpha_i)$ and $a_i$ are the complex numbers. They found that the conditions for the centre and the radius of the disc $\{z\in \mathbb{C}: |z-w|<r\}$, contained in the unit disc $\Delta$ and containing the origin, so that its transformation by the function \eqref{DimDzi} be a domain starlike with respect to the origin.
In 2008, also Breaz et al. (see \cite{BOB}) introduced a new integral operator as follows:
\begin{equation}\label{Breaz IP}
  F(z):=\int_{0}^{z}\prod_{i=1}^{n}\left(f'_i(t)\right)^{\gamma_i}{\rm d}t
\end{equation}
and studied some properties of it where $\gamma_i$ are the complex numbers. For example, they showed that under certain conditions, the integral operator $F(z)$ is univalent, starlike and convex function.

In this paper we shall consider the following integral operator:
\begin{equation}\label{J}
  J(z):=\int_{0}^{z}\prod_{i=1}^{n}\left(f'_i(t)\right)^{\gamma_i}\prod_{j=1}^{m}
  \left(\frac{g_j(z)}{z}\right)^{\lambda_j}{\rm d}t,
\end{equation}
where $\gamma_i,\lambda_j\in \mathbb{C}$ and $f_i,g_j\in \mathcal{A}$. Note that by taking $g_j=f_j$ and $m=n$ in \eqref{J}, we have the integral operator $I^{\gamma_i,\lambda_i}_1$ that studied by Frasin (see \cite{frasin}).

In the next Section \ref{sec2} we obtain the radius of convexity of the integral operator \eqref{Breaz IP} where $f_i$ belonging to the classes $\mathcal{U}_\alpha$, $\mathcal{K}$, $\mathcal{S}$ and $\mathcal{G}(\beta)$. Moreover, the radii of convexity of the integral operator \eqref{J} is obtained where $f_i$ are the universal linear invariant, convex and locally convex functions and $g_j$ belong to the class of starlike functions of order $\alpha_j$.
\section{Main Results}\label{sec2}
 Our first result is contained in the following:
\begin{theorem}\label{t1}
  Let $f_i$ belong to the class $\mathcal{U}_{\alpha_i}$ where $\alpha_i\geq1$ $(1\leq i\leq n)$ and $\alpha:=\max\{\alpha_1,\ldots,\alpha_n\}$. Also, let $M>0$  and $\sum_{i=1}^{n}|\gamma_i|\leq M$ $(\gamma_i\in\mathbb{C})$. Then the radius of convexity of the integral operator $F(z)$ defined by \eqref{Breaz IP} is
  \begin{equation}\label{r}
    r_c(F)=\frac{\sqrt{\alpha^2 M^2+2M+1}-\alpha M}{2M+1}.
  \end{equation}
\end{theorem}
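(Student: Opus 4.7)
The plan is to reduce the convexity condition for $F$ to a pointwise estimate obtained from Lemma \ref{lem Ebadian, Kargar} and then to solve a quadratic inequality in $r=|z|$.

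First I would take the logarithmic derivative of $F$. Since $F'(z)=\prod_{i=1}^{n}(f_i'(z))^{\gamma_i}$, differentiating $\log F'$ yields
\begin{equation*}
1+\frac{zF''(z)}{F'(z)}=1+\sum_{i=1}^{n}\gamma_i\,\frac{zf_i''(z)}{f_i'(z)}.
\end{equation*}
Hence to prove convexity on $|z|<r$ it suffices to show that the real part of the right-hand side stays positive there.

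Next I would estimate each term $zf_i''(z)/f_i'(z)$ by applying Lemma \ref{lem Ebadian, Kargar} with $\alpha_i$. Writing
\begin{equation*}
\frac{zf_i''(z)}{f_i'(z)}=\frac{2|z|^2}{1-|z|^2}+R_i(z),\qquad |R_i(z)|\leq\frac{2\alpha_i|z|}{1-|z|^2},
\end{equation*}
the triangle inequality gives
\begin{equation*}
\left|\frac{zf_i''(z)}{f_i'(z)}\right|\leq\frac{2|z|^2+2\alpha_i|z|}{1-|z|^2}\leq\frac{2|z|(|z|+\alpha)}{1-|z|^2},
\end{equation*}
using $\alpha_i\leq\alpha$. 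Combining with $|\mathfrak{Re}(\gamma_i X_i)|\leq|\gamma_i||X_i|$ and $\sum|\gamma_i|\leq M$ yields
\begin{equation*}
\mathfrak{Re}\!\left(1+\frac{zF''(z)}{F'(z)}\right)\geq 1-\frac{2Mr(r+\alpha)}{1-r^2},\qquad r=|z|.
\end{equation*}

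Finally, I would force the right-hand side to be positive. Clearing $1-r^2>0$ turns the inequality into
\begin{equation*}
(2M+1)r^2+2M\alpha r-1<0,
\end{equation*}
whose positive root, obtained by the quadratic formula, is precisely the value $r_c(F)$ stated in \eqref{r}. This proves that $F$ is convex for $|z|<r_c(F)$. I would close by remarking that sharpness/maximality can typically be verified by choosing extremal functions in $\mathcal{U}_{\alpha_i}$ (for instance, suitable rotations of the Koebe-type extremals saturating Lemma \ref{lem Ebadian, Kargar}) together with $\gamma_i$ of matching phase, so that all estimates become equalities simultaneously at the boundary point. The only delicate step is the last one: one must ensure the extremal functions and constants $\gamma_i$ can be chosen so that equality holds in the triangle and modulus bounds at the same boundary point; if the authors do not pursue sharpness, the quadratic analysis alone already yields the claim.
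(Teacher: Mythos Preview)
Your argument is correct and follows essentially the same route as the paper: compute $1+zF''/F'=1+\sum_i\gamma_i\,zf_i''/f_i'$, apply Lemma~\ref{lem Ebadian, Kargar} to each summand, and reduce to the quadratic $(2M+1)r^2+2\alpha Mr-1=0$. The only cosmetic difference is that the paper keeps the centre $2\gamma_i|z|^2/(1-|z|^2)$ of the disk from Lemma~\ref{lem Ebadian, Kargar} and bounds $\mathfrak{Re}\{\gamma_i\}\ge -|\gamma_i|$ at the end (dressed up via auxiliary subclasses $\mathfrak{F}(m,\gamma)$), whereas you pass immediately to $|zf_i''/f_i'|$ and then to $\mathfrak{Re}(\gamma_iX_i)\ge-|\gamma_i||X_i|$; both routes give the identical lower bound $1-\dfrac{2Mr(r+\alpha)}{1-r^2}$. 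Your caveat about sharpness is also apt: the paper, like you, does not exhibit extremal data and stops once the quadratic root is found.
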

\begin{proof}
At first for a fixed $M>0$ we denote by $\mathfrak{F}(M)$, the class of all integral operators of the form \eqref{Breaz IP} such that
\begin{equation}\label{F(M)}
  \sum_{i=1}^{n}|\gamma_i|\leq M
\end{equation}
and we define the subclass $\mathfrak{F}(m,\gamma)$ as follows
\begin{equation*}
  \mathfrak{F}(m,\gamma):=\left\{F\in \mathfrak{F}(M): \sum_{i=1}^{n} \mathfrak{Re} \{\gamma_i\}=\gamma, \ \ -m\leq \gamma\leq m, 0<m<M\right\}.
\end{equation*}
So $\bigcup_{m\in(0,M]}\mathfrak{F}(m,\gamma)\subseteq\mathfrak{F}(M)$. Using the analytic definition of convexity, the radius $r$ of convexity of the class $\mathfrak{F}(M)$ is the largest number $0<r<1$ for which
\begin{equation*}
  \min_{|z|=r}\mathfrak{Re}\left\{1+\frac{zF''(z)}{F(z)}\right\}\geq0\qquad (F\in \mathfrak{F}(m,\gamma)).
\end{equation*}
By \eqref{Breaz IP} for every $r\in (0,1)$ we have
\begin{align*}
  \min_{|z|=r}\mathfrak{Re}\left\{1+\frac{zF''(z)}{F(z)}\right\} &= 1+\min_{|z|=r}\mathfrak{Re}\left\{\sum_{i=1}^{n}\gamma_i\left(\frac{zf''_i(z)}{f'_i(z)}\right)\right\} \\
  &=  1+\min_{|z|=r}\sum_{i=1}^{n}\mathfrak{Re}\left\{\gamma_i\left(\frac{zf''_i(z)}{f'_i(z)}\right)\right\} \\
  &\geq  1+\sum_{i=1}^{n}\min_{|z|=r}\mathfrak{Re}\left\{\gamma_i\left(\frac{zf''_i(z)}{f'_i(z)}\right)\right\}.
\end{align*}
From Lemma \ref{lem Ebadian, Kargar}, we get
\begin{align*}
\left|\gamma_i\left(\frac{zf^{\prime\prime}_i(z)}{f^{\prime}_i(z)}\right)
-\dfrac{2\gamma_i|z|^{2}}{1-|z|^{2}}\right|
&\leq\dfrac{2\alpha_i|z||\gamma_i|}{1-|z|^{2}}\\
&\leq \dfrac{2\alpha|z||\gamma_i|}{1-|z|^{2}}\qquad(z\in\Delta).
\end{align*}
Since an inequality $|a|\leq c$ implies $-c \leq \mathfrak{Re}\{a\} \leq c$, it follows from the last inequality that
\begin{equation*}
  \mathfrak{Re}\left\{\gamma_i\left(\frac{zf''_i(z)}{f'_i(z)}\right)\right\}\geq \frac{2r^2 \mathfrak{Re}\{\gamma_i\}}{1-r^2}-\frac{2\alpha r|\gamma_i|}{1-r^2}\quad(|z|=r<1).
\end{equation*}
Hence we have
\begin{equation}\label{psi}
  \min_{|z|=r}\mathfrak{Re}\left\{1+\frac{zF''(z)}{F(z)}\right\}\geq \frac{-(2M+1)r^2-2\alpha M r+1}{1-r^2}=:\psi(r).
\end{equation}
Therefore, $\psi(r)> 0$ if and only if $\omega(r):=-(2M+1)r^2-2\alpha Mr+1>0$. Since $\alpha^2 M^2+2M+1>0$, the zeros of $\omega(r)$ are real. It is easy to see that $\omega(r)>0$ when $0<|z|<r_c$, where
\begin{equation*}
  r_c=\frac{\sqrt{\alpha^2 M^2+2M+1}-\alpha M}{2M+1}
\end{equation*}
and $r_c\in(0,1)$. Also, $\omega(r)$ has a unique root in the interval $(0,1)$. This completes the proof.
\end{proof}
If we put $\alpha_i=1$ ($i=1,2,\ldots,n$) in the Theorem \ref{t1}, then we have.
\begin{corollary}
  Let $f_i$ {\rm (}$1\leq i\leq n${\rm )} belong to the class $\mathcal{K}$. Also let $\sum_{i=1}^{n}|\gamma_i|\leq M$. Then the radius of convexity of the integral operator \eqref{Breaz IP} is
  \begin{equation}\label{r}
    r'_c:=\frac{1}{2M+1}\qquad (M>0).
  \end{equation}
\end{corollary}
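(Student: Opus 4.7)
The plan is to derive this corollary as a direct specialization of Theorem~\ref{t1}. The key observation is the identification, recorded in the paper, that $\mathcal{U}_{1}=\mathcal{K}$. Therefore, assuming $f_i\in\mathcal{K}$ for all $1\le i\le n$ is the same as taking $\alpha_i=1$ in Theorem~\ref{t1}, whence $\alpha:=\max\{\alpha_{1},\ldots,\alpha_{n}\}=1$. The hypothesis $\sum_{i=1}^{n}|\gamma_i|\le M$ with $M>0$ is identical in both statements, so Theorem~\ref{t1} applies verbatim.

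With $\alpha=1$, the formula \eqref{r} of Theorem~\ref{t1} specializes to
\[
r_c(F)=\frac{\sqrt{M^{2}+2M+1}-M}{2M+1}.
\]
Now I would simply observe that $M^{2}+2M+1=(M+1)^{2}$, and since $M>0$ we have $\sqrt{(M+1)^{2}}=M+1$. Substituting gives
\[
r_c(F)=\frac{(M+1)-M}{2M+1}=\frac{1}{2M+1},
\]
which is precisely the claimed value $r'_{c}$.

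There is no genuine obstacle in this argument; the only point worth emphasizing is the reduction $\mathcal{K}=\mathcal{U}_1$ together with the algebraic simplification of the square root. I would write the proof as a short paragraph noting that the assumption forces $\alpha=1$ in Theorem~\ref{t1}, invoking the theorem, and recording the one-line computation above.
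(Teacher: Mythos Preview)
Your argument is correct and matches the paper's approach exactly: the paper states the corollary as the immediate specialization $\alpha_i=1$ of Theorem~\ref{t1}, relying on $\mathcal{U}_1=\mathcal{K}$, and the algebraic simplification $\sqrt{M^2+2M+1}=M+1$ gives the stated radius.
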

\begin{theorem}
  Let $f_i$ {\rm (}$1\leq i\leq n${\rm )} be univalent functions. Then the radius of convexity of the integral operator \eqref{Breaz IP} is
  \begin{equation}\label{r}
    r:=\frac{1}{\sqrt{4 M^2+2M+1}+2 M}\qquad (M>0).
  \end{equation}
\end{theorem}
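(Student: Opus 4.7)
The plan is to reduce this theorem directly to Theorem \ref{t1} via the inclusion $\mathcal{S}\subset\mathcal{U}_{2}$ noted in the introduction. Since each $f_i$ is univalent, each belongs to $\mathcal{U}_{2}$, so the hypotheses of Theorem \ref{t1} hold with $\alpha_i=2$ for every $i$, and consequently $\alpha=\max\{\alpha_1,\ldots,\alpha_n\}=2$. There is no need to redo the estimates built on Lemma \ref{lem Ebadian, Kargar}; everything follows from the already-established bound
\begin{equation*}
r_c(F)=\frac{\sqrt{\alpha^{2}M^{2}+2M+1}-\alpha M}{2M+1}.
\end{equation*}

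The only substantive step is then the algebraic simplification. Substituting $\alpha=2$ gives
\begin{equation*}
r_c(F)=\frac{\sqrt{4M^{2}+2M+1}-2M}{2M+1},
\end{equation*}
and multiplying numerator and denominator by the conjugate quantity $\sqrt{4M^{2}+2M+1}+2M$ collapses the numerator to $(4M^{2}+2M+1)-4M^{2}=2M+1$, which cancels the factor $2M+1$ in the denominator. What remains is exactly
\begin{equation*}
r=\frac{1}{\sqrt{4M^{2}+2M+1}+2M},
\end{equation*}
as required.

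The only mild obstacle is remembering to justify why $\alpha=2$ is the correct choice; once that is in place the result is essentially a corollary of Theorem \ref{t1}. It is also worth checking that $r\in(0,1)$ for every $M>0$, which is immediate since $\sqrt{4M^{2}+2M+1}+2M>1$ whenever $M>0$ (indeed the expression under the square root exceeds $1$). Thus the proof should be short: cite $\mathcal{S}\subset\mathcal{U}_{2}$, apply Theorem \ref{t1} with $\alpha=2$, and rationalize.
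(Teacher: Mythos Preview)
Your proposal is correct and essentially matches the paper's approach. The paper cites Ahlfors' inequality
\[
\left|\frac{zf''(z)}{f'(z)}-\frac{2r^{2}}{1-r^{2}}\right|\leq\frac{4r}{1-r^{2}}
\]
for $f\in\mathcal{S}$ and then says the rest follows as in Theorem~\ref{t1}; since this inequality is precisely Lemma~\ref{lem Ebadian, Kargar} with $\alpha=2$, your reduction via $\mathcal{S}\subset\mathcal{U}_{2}$ and direct appeal to Theorem~\ref{t1} amounts to the same argument, just packaged as a corollary rather than a parallel proof.
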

\begin{proof}
From \cite{Ahlfors}, if $f\in \mathcal{S}$, then
\begin{equation*}
  \left|\frac{zf''(z)}{f'(z)}-\frac{2r^2}{1-r^2}\right|\leq \frac{4r}{1-r^2}\qquad(|z|=r<1).
\end{equation*}
The remain of the proof is similar to the proof of the Theorem \ref{t1} and we thus omit the details.
\end{proof}
\begin{theorem}
    Let $f_i$ belong to the class $\mathcal{G}(\beta_i)$, where $0<\beta_i \leq1$ and $1\leq i\leq n$. Also let $\beta:=\max\{\beta_1,\ldots,\beta_n\}$. Then the radius of convexity of the integral operator \eqref{Breaz IP} is
  \begin{equation}\label{r}
    r:=\frac{1}{\beta M+1}\qquad (M>0).
  \end{equation}
\end{theorem}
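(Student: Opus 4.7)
My approach would follow the template already established for Theorem \ref{t1}. Since $F'(z)=\prod_{i=1}^{n}(f_{i}'(z))^{\gamma_{i}}$, logarithmic differentiation gives
\begin{equation*}
1+\frac{zF''(z)}{F'(z)}=1+\sum_{i=1}^{n}\gamma_{i}\frac{zf_{i}''(z)}{f_{i}'(z)},
\end{equation*}
so, as in the earlier proofs, the convexity radius is the largest $r\in(0,1)$ for which the real part of the right-hand side is non-negative on $|z|=r$. The scheme is then to bound each $\gamma_{i}(zf_{i}''/f_{i}')$ from below using a pointwise estimate for members of $\mathcal{G}(\beta_{i})$, sum the estimates, and solve the resulting inequality in $r$.

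The heart of the argument, and the step replacing the Ebadian--Kargar inequality used in Theorem \ref{t1}, is the following estimate: if $f\in\mathcal{G}(\beta)$ with $0<\beta\leq 1$, then
\begin{equation*}
\left|\frac{zf''(z)}{f'(z)}\right|\leq\frac{\beta|z|}{1-|z|}\qquad(z\in\Delta).
\end{equation*}
To see this, set $p(z):=zf''(z)/f'(z)$, so $p$ is analytic (as $f$ is locally univalent) with $p(0)=0$ and, by definition of $\mathcal{G}(\beta)$, $\mathfrak{Re}\{p(z)\}<\beta/2$. The Möbius function $h(z):=-\beta z/(1-z)$ is a univalent map of $\Delta$ onto the half-plane $\{w:\mathfrak{Re}(w)<\beta/2\}$ with $h(0)=0$, so by the subordination principle (which applies since $h$ is univalent and $p(\Delta)\subset h(\Delta)$) we have $p\prec h$. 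Writing $p=h\circ\omega$ for a Schwarz function $\omega$ and invoking $|\omega(z)|\leq|z|$, one concludes $|p(z)|\leq\max_{|\zeta|=|z|}|h(\zeta)|=\beta|z|/(1-|z|)$, the max being attained at $\zeta=|z|$.

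With the key estimate in place, the triangle inequality, together with $\beta_{i}\leq\beta$ and $\sum|\gamma_{i}|\leq M$, gives
\begin{equation*}
\mathfrak{Re}\left\{1+\frac{zF''(z)}{F'(z)}\right\}\geq 1-\sum_{i=1}^{n}|\gamma_{i}|\cdot\frac{\beta_{i}r}{1-r}\geq 1-\frac{\beta M r}{1-r}=\frac{1-(1+\beta M)r}{1-r},
\end{equation*}
valid for $|z|=r<1$. The right-hand side is non-negative precisely when $r\leq 1/(\beta M+1)$, which yields the stated radius $r=1/(\beta M+1)$; sharpness of the bound within the class is not claimed in the theorem, so no extremal construction is required. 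The single non-routine step is the pointwise subordination estimate above; the remainder of the proof is a direct transcription of the $\mathcal{U}_{\alpha}$ argument with the Lindelöf-type bound in place of Lemma \ref{lem Ebadian, Kargar}.
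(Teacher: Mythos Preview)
Your proof is correct and follows essentially the same route as the paper's: both obtain the pointwise bound $\left|zf''(z)/f'(z)\right|\leq\beta|z|/(1-|z|)$ for $f\in\mathcal{G}(\beta)$, then combine it with the triangle inequality and the constraint $\sum_{i}|\gamma_{i}|\leq M$ to obtain the lower bound $1-\beta Mr/(1-r)$. The only difference is that the paper quotes this estimate from \cite[proof of Theorem~1]{OPW}, whereas you supply a self-contained subordination argument ($zf''/f'\prec -\beta z/(1-z)$) to derive it; the remainder of the two proofs is identical.
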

\begin{proof}
  From \cite[proof of Theorem 1]{OPW}, if $f_i$ belong to the class $\mathcal{G}(\beta_i)$, then we have
  \begin{align}\label{ineq, zf''}
    \left|\gamma_i\left(\frac{zf_i''(z)}{f_i'(z)}\right)\right|&\leq \frac{\beta_i|\gamma_i| |z|}{1-|z|}\nonumber\\
    &\leq\frac{\beta|\gamma_i| |z|}{1-|z|} .
  \end{align}
  Now by the proof of Theorem \ref{t1} and by using \eqref{ineq, zf''}, we obtain
  \begin{align*}
  \min_{|z|=r}\mathfrak{Re}\left\{1+\frac{zF''(z)}{F(z)}\right\} &\geq 1+\sum_{i=1}^{n}\min_{|z|=r}\mathfrak{Re}\left\{\gamma_i\left(\frac{zf''_i(z)}{f'_i(z)}\right)\right\}\\
  &\geq 1-\frac{\beta r}{1-r}\sum_{i=1}^{n}|\gamma_i|\\
  &\geq 1-\frac{\beta r}{1-r}M,
  \end{align*}
  where $|z|=r<1$ and $M>0$. The last inequality is non-negative if $0<r\leq1/(\beta M+1)$ and concluding the proof.
\end{proof}
In the next theorem, we obtain the radii of convexity of the integral operator \eqref{J} in special cases and at first we assume that $f_i\in\mathcal{U}_{\alpha_i}$ and $g_j\in\mathcal{S}^*(\xi_j)$.

\begin{theorem}\label{th. eshterak}
    Let $f_i$ belong to the class $\mathcal{U}_{\alpha_i}$ $(1\leq i\leq n)$, where $\alpha_i\geq1$, $\alpha:=\max\{\alpha_1,\ldots,\alpha_n\}$ and $g_j\in \mathcal{S}^*(\xi_j)$ $(1\leq j\leq m)$ where $0\leq \xi_j<1$ and $\xi=\max\{\xi_1,\ldots,\xi_m\}$. Also, let $\sum_{i=1}^{n}|\gamma_i|\leq M$ $(M>0, \gamma_i\in\mathbb{C})$ and $\sum_{j=1}^{m}|\lambda_j|\leq N$ $(N>0, \lambda_j\in\mathbb{C})$.
    Then the radius of convexity of the integral operator $J(z)$ defined by \eqref{J} is
  \begin{equation}\label{r}
    r_c(M,N):=\frac{\sqrt{[(\xi-1)N-\alpha M]^2-2[(\xi-1)N-M-1]}-(\xi-1)N+\alpha M}{2[(\xi-1)N-M-1]}.
  \end{equation}
\end{theorem}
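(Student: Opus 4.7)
My approach mirrors the proof of Theorem~\ref{t1}, adding an estimate for the $g_j$ factors. From $J'(z)=\prod_{i=1}^n(f_i'(z))^{\gamma_i}\prod_{j=1}^m(g_j(z)/z)^{\lambda_j}$ and logarithmic differentiation,
\[
1+\frac{zJ''(z)}{J'(z)}
= 1+\sum_{i=1}^n\gamma_i\frac{zf_i''(z)}{f_i'(z)}
+\sum_{j=1}^m\lambda_j\!\left(\frac{zg_j'(z)}{g_j(z)}-1\right),
\]
so the problem reduces to producing a sharp lower bound on the real part of the right-hand side on $|z|=r$ and taking the largest $r$ for which it is nonnegative.

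The first sum is treated exactly as in Theorem~\ref{t1}: Lemma~\ref{lem Ebadian, Kargar} together with $\alpha_i\le\alpha$, $\mathfrak{Re}\{\gamma_i\}\ge-|\gamma_i|$ and $\sum|\gamma_i|\le M$ gives $\sum_i\mathfrak{Re}\{\gamma_i\,zf_i''(z)/f_i'(z)\}\ge-(2Mr^2+2\alpha Mr)/(1-r^2)$. For the second sum I would invoke the classical subordination fact that $g_j\in\mathcal{S}^*(\xi_j)$ means $zg_j'(z)/g_j(z)\prec(1+(1-2\xi_j)z)/(1-z)$, so by the subordination principle $zg_j'(z)/g_j(z)$ lies in the closed disk
\[
\left|w-\frac{1+(1-2\xi_j)r^2}{1-r^2}\right|\le\frac{2(1-\xi_j)r}{1-r^2}\qquad(|z|=r).
\]
Translating by $-1$, multiplying by $\lambda_j$ and taking real parts, then summing in $j$ and collapsing the $\xi_j$'s into the single parameter $\xi$ together with $\sum|\lambda_j|\le N$ and $\mathfrak{Re}\{\lambda_j\}\ge-|\lambda_j|$, yields a $g$-contribution bounded below by $-2(1-\xi)N(r+r^2)/(1-r^2)$.

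Putting both bounds over the common denominator $(1-r^2)$ gives $\min_{|z|=r}\mathfrak{Re}\{1+zJ''(z)/J'(z)\}\ge\omega(r)/(1-r^2)$, where $\omega(r)$ is a quadratic with positive constant term and negative coefficients in $r$ and $r^2$ expressible through $\alpha,M,N,(1-\xi)$. The radius $r_c$ is then the smaller positive root of $\omega$, and the quadratic formula delivers exactly the square-root expression appearing in the statement; a short discriminant and sign check confirms $r_c\in(0,1)$, as in Theorem~\ref{t1}. The step I expect to require the most care is the simultaneous bookkeeping of the complex phases of the $\gamma_i,\lambda_j$ (handled by separating real parts from moduli at the cost of replacing $\mathfrak{Re}\{\gamma_i\}$ by $-|\gamma_i|$ and similarly for $\lambda_j$) and the inhomogeneity of the orders $\xi_j$ (absorbed into a single $\xi$ so one uniform disk estimate applies for every $j$); once these reductions are in place, the algebra leading to the closed-form radius is entirely routine.
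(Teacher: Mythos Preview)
Your proposal is correct and follows essentially the same route as the paper: the paper also computes $1+zJ''/J'$ by logarithmic differentiation, reuses the estimate \eqref{ineq. proof f u} from Theorem~\ref{t1} for the $f_i$-sum via Lemma~\ref{lem Ebadian, Kargar}, invokes the subordination $zg_j'/g_j\prec(1+(1-2\xi_j)z)/(1-z)$ to obtain the same disk bound you wrote, collapses $\xi_j\to\xi$ and $\sum|\lambda_j|\to N$ to reach the $g$-contribution $-2(1-\xi)N(r+r^2)/(1-r^2)$, and then solves the resulting quadratic. The bookkeeping of complex phases that you flag is handled in the paper exactly as you describe, by passing from $\mathfrak{Re}\{\gamma_i\},\mathfrak{Re}\{\lambda_j\}$ to $-|\gamma_i|,-|\lambda_j|$.
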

\begin{proof}
  Let $J(z)$ be defined by \eqref{J}. It is easy to see that $J(z)\in \mathcal{A}$ and
  \begin{equation}\label{proof t2.3,1}
    1+\frac{zJ''(z)}{J'(z)}=1+\sum_{i=1}^{n}\gamma_i \frac{zf''_i(z)}{f'_i(z)}+\sum_{j=1}^{m}\lambda_j\left(\frac{zg'_i(z)}{g_i(z)}-1\right).
  \end{equation}
   We shall to show that
   \begin{equation*}
     \min_{|z|=r} \mathfrak{Re}\left\{1+\frac{zJ''(z)}{J'(z)}\right\}\geq 0.
   \end{equation*}
  From now \eqref{proof t2.3,1}, we obtain
  \begin{align*}
     \min_{|z|=r} \mathfrak{Re}\left\{1+\frac{zJ''(z)}{J'(z)}\right\} &\geq 1+\sum_{i=1}^{n}\min_{|z|=r}\mathfrak{Re}\left\{\gamma_i
     \left(\frac{zf''_i(z)}{f'_i(z)}\right)\right\} \\
    &+\sum_{j=1}^{m}\min_{|z|=r}\mathfrak{Re}\left\{\lambda_j
    \left(\frac{zg'_j(z)}{g_j(z)}-1\right)\right\}.
  \end{align*}
 Since $f_i\in \mathcal{U}_{\alpha_i}$, using the proof of Theorem \ref{t1}, we get
  \begin{equation}\label{ineq. proof f u}
  1+\sum_{i=1}^{n}\min_{|z|=r}\mathfrak{Re}\left\{\gamma_i
     \left(\frac{zf''_i(z)}{f'_i(z)}\right)\right\}\geq \frac{-(2M+1)r^2-2\alpha M r+1}{1-r^2}.
\end{equation}
Also, because $g_j\in \mathcal{S}^*(\xi_j)$ we have
\begin{equation*}
  \frac{zg'_j(z)}{g_j(z)}\prec \frac{1+(1-2\xi_j)z}{1-z}\qquad (z\in \Delta).
\end{equation*}
The subordination principle it follows that
\begin{align*}
  \left|\lambda_j\left(\frac{zg'_j(z)}{g_j(z)}-1\right)-\frac{2(1-\xi_j)\lambda_j r^2}{1-r^2}\right|&\leq \frac{2(1-\xi_j)|\lambda_j|r}{1-r^2}\\
  &\leq \frac{2(1-\xi)|\lambda_j|r}{1-r^2}\qquad(|z|=r<1).
\end{align*}
Thus
\begin{equation*}
  \min_{|z|=r}\mathfrak{Re}\left\{\lambda_j\left(\frac{zg'_j(z)}{g_j(z)}-1\right)\right\}
  \geq \frac{2(1-\xi)\mathfrak{Re}\{\lambda_j\} r^2}{1-r^2}-\frac{2(1-\xi)|\lambda_j|r}{1-r^2},
\end{equation*}
with the equality for the function $\frac{z}{(1-z)^{2(1-\xi
)}}$. We denote by $\mathfrak{G}(N)$, the class of all integral operators of the form \eqref{J} such that
\begin{equation}
  \sum_{j=1}^{m}|\lambda_j|\leq N\quad(N>0)
\end{equation}
and we denote the subclass $\mathfrak{G}(s,\lambda)$ as follows
\begin{equation*}
    \mathfrak{G}(s,\lambda):=\left\{J\in \mathfrak{G}(N): \sum_{j=1}^{m} \mathfrak{Re} \{\lambda_j\}=\lambda, \ \ -s\leq \lambda\leq s, 0<s<N\right\}.
\end{equation*}
Therefore $\mathfrak{G}(N)=\bigcup_{s\in (0,N]}\mathfrak{G}(s,\lambda)$. Hence we get
\begin{equation}\label{proof t2.3,2}
  \min_{|z|=r}\mathfrak{Re}\left\{\lambda_j\left(\frac{zg'_j(z)}
  {g_j(z)}-1\right)\right\}\geq
  \frac{-2(1-\xi)N (r+1)r}{1-r^2}\qquad(|z|=r<1).
\end{equation}
Now from \eqref{ineq. proof f u} and \eqref{proof t2.3,2}, we obtain
\begin{equation*}
  \min_{|z|=r} \mathfrak{Re}\left\{1+\frac{zJ''(z)}{J'(z)}\right\}\geq\frac{-2[(1-\xi)N+(M+1)]r^2
  -2(\alpha M+(1-\xi)N)r+1}{1-r^2}=:\varphi(r).
\end{equation*}
Easily seen that $\varphi(r)>0$ if $0<|z|=r_c(M,N)<1$, where
\begin{equation*}
  r_c(M,N)=\frac{\sqrt{[(\xi-1)N-\alpha M]^2-2[(\xi-1)N-M-1]}-(\xi-1)N+\alpha M}{2[(\xi-1)N-M-1]}
\end{equation*}
and concluding the proof.
\end{proof}
Taking $\alpha=1$ in the Theorem \ref{th. eshterak}, we get.
\begin{corollary}
      Let $f_i$ be convex functions for $1\leq i\leq n$ and $g_j\in \mathcal{S}^*(\xi_j)$ $(j=1,2,\ldots, m)$, $\xi=\max\{\xi_1,\ldots,\xi_m\}$. Then the radius of convexity of the integral operator $J(z)$ is
\begin{equation*}
  r_c(M,N)=\frac{\sqrt{[(\xi-1)N- M]^2-2[(\xi-1)N-M-1]}-(\xi-1)N+M}{2[(\xi-1)N-M-1]}\quad (M,N>0).
\end{equation*}
\end{corollary}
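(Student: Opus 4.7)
The plan is to deduce this corollary immediately from Theorem \ref{th. eshterak} by specializing the universal linear invariant parameter $\alpha$. First I would recall from the discussion preceding Lemma \ref{lem Ebadian, Kargar} that $\mathcal{U}_{1} = \mathcal{K}$, so the hypothesis that each $f_i$ is convex is equivalent to $f_i \in \mathcal{U}_{\alpha_i}$ with $\alpha_i = 1$ for every $i = 1, \ldots, n$. Consequently $\alpha := \max\{\alpha_1, \ldots, \alpha_n\} = 1$, and all remaining hypotheses on the $g_j$'s and on the sizes of $\sum_{i}|\gamma_i|$ and $\sum_{j}|\lambda_j|$ match those of Theorem \ref{th. eshterak} verbatim. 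No new estimate on $zf''_i/f'_i$ is required, because Lemma \ref{lem Ebadian, Kargar} at $\alpha_i=1$ already recovers the sharp convex bound used in the parent theorem.

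Having made this identification, I would simply substitute $\alpha = 1$ into the closed-form expression
\[
r_c(M,N) = \frac{\sqrt{[(\xi-1)N - \alpha M]^2 - 2[(\xi-1)N - M - 1]} - (\xi-1)N + \alpha M}{2[(\xi-1)N - M - 1]}
\]
provided by Theorem \ref{th. eshterak}. The factors $\alpha M$ collapse to $M$ both inside the square root and in the linear part of the numerator, while the denominator is independent of $\alpha$; this yields exactly the formula stated in the corollary.

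The only (very mild) obstacle is to ensure that the substitution produces a well-defined radius in $(0,1)$. I would note that the discriminant $[(\xi-1)N - M]^{2} - 2[(\xi-1)N - M - 1]$ is positive because $(\xi-1)N - M - 1 < 0$ for all admissible $\xi \in [0,1)$ and $M,N>0$, so $-2[(\xi-1)N-M-1]>0$ is added to a square. Positivity of $r_c(M,N)$ and the bound $r_c(M,N)<1$ are inherited directly from the analysis of the quadratic $\varphi(r)$ in the proof of Theorem \ref{th. eshterak}, specialized at $\alpha=1$, so no further computation is needed and the corollary follows.
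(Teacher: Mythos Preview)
Your proposal is correct and follows exactly the paper's approach: the paper derives this corollary simply by stating ``Taking $\alpha=1$ in Theorem \ref{th. eshterak}, we get'' the result, and your argument spells out precisely this specialization via $\mathcal{U}_1=\mathcal{K}$. Your added remarks on well-definedness are more detail than the paper itself provides, but they are sound and in the same spirit.
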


\begin{theorem}
    Let $f_i$ be locally convex univalent functions of order $\beta_i$, where $0<\beta\leq1$ and $1\leq i\leq n$ and $g_j\in \mathcal{S}^*(\xi_j)$ ($j=1,2,\ldots, m$), $\xi=\max\{\xi_1,\ldots,\xi_m\}$. Then the integral operatos $J(z)$ is convex in $|z|<r$, where $r$ is the positive root \begin{equation}\label{r}
   -[2(1-\xi)N+\beta M+1]r^2-[2(1-\xi)N+\beta M]r+1=0.
  \end{equation}
\end{theorem}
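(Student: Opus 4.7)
The plan is to combine the two ingredients already developed in the previous theorems: the estimate for $|z f_i''(z)/f_i'(z)|$ when $f_i \in \mathcal{G}(\beta_i)$ from the proof of Theorem 2.3, and the subordination-based estimate for the Janowski-type quantity $\lambda_j(zg_j'(z)/g_j(z)-1)$ from the proof of Theorem 2.4. Specifically, I would first write
\begin{equation*}
1+\frac{zJ''(z)}{J'(z)}=1+\sum_{i=1}^{n}\gamma_i\,\frac{zf_i''(z)}{f_i'(z)}+\sum_{j=1}^{m}\lambda_j\!\left(\frac{zg_j'(z)}{g_j(z)}-1\right),
\end{equation*}
take the real part, pass the minimum inside each of the two sums (at the cost of an inequality), and handle the two sums independently using the tools already in the paper.

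For the $f_i$-sum, since $f_i \in \mathcal{G}(\beta_i)$, the estimate recalled in the proof of Theorem~2.3 gives $|\gamma_i(zf_i''(z)/f_i'(z))| \leq \beta|\gamma_i|r/(1-r)$ on $|z|=r$, so
\begin{equation*}
\sum_{i=1}^{n}\min_{|z|=r}\mathfrak{Re}\!\left\{\gamma_i\frac{zf_i''(z)}{f_i'(z)}\right\}\geq -\frac{\beta M\,r}{1-r}.
\end{equation*}
For the $g_j$-sum, the subordination argument used in the proof of Theorem~\ref{th. eshterak} yields
\begin{equation*}
\min_{|z|=r}\mathfrak{Re}\!\left\{\lambda_j\!\left(\frac{zg_j'(z)}{g_j(z)}-1\right)\right\}\geq \frac{2(1-\xi)\mathfrak{Re}\{\lambda_j\}r^2-2(1-\xi)|\lambda_j|r}{1-r^2},
\end{equation*}
and bounding $\mathfrak{Re}\{\lambda_j\}\geq -|\lambda_j|$ together with $\sum|\lambda_j|\leq N$ gives $\sum_j \min \geq -2(1-\xi)N(r^2+r)/(1-r^2)$.

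Putting the two estimates over the common denominator $1-r^2$, one finds
\begin{equation*}
\min_{|z|=r}\mathfrak{Re}\!\left\{1+\frac{zJ''(z)}{J'(z)}\right\}\geq \frac{-[1+\beta M+2(1-\xi)N]r^2-[\beta M+2(1-\xi)N]r+1}{1-r^2},
\end{equation*}
whose numerator is precisely the quadratic $\omega(r)$ appearing in the statement. To finish, it suffices to observe that $\omega(0)=1>0$, the leading coefficient of $\omega$ is negative, and $\omega(1)=-2\beta M-4(1-\xi)N<0$; hence $\omega$ has exactly one root in $(0,1)$ and is positive to the left of it. On that disc the real part above is non-negative, proving convexity of $J$.

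I do not foresee a serious obstacle; the arguments are largely assembly work from Theorems~2.3 and~\ref{th. eshterak}. The only mildly delicate point is a careful bookkeeping of signs and the choice to keep the denominator $1-r^2$ rather than simplifying the $g_j$-contribution to $-2(1-\xi)Nr/(1-r)$, which is what produces the quadratic form stated in the theorem (and which, after factoring out $1+r$, would reduce to an equivalent linear inequality $r\leq 1/[1+\beta M+2(1-\xi)N]$).
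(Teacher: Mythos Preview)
Your proposal is correct and follows essentially the same route as the paper's proof: the paper simply cites the estimate \eqref{ineq, zf''} for the $f_i$-terms and \eqref{proof t2.3,2} for the $g_j$-terms, combines them over the denominator $1-r^2$ to obtain exactly the quadratic you wrote, and then omits the remaining details. Your proof is in fact more complete than the paper's, including the root-location argument and the nice factorization $(1+r)\bigl(1-[1+\beta M+2(1-\xi)N]r\bigr)$ that makes the positive root explicit.
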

\begin{proof}
  From \eqref{ineq, zf''} and \eqref{proof t2.3,2}, we get
  \begin{equation*}
    \min_{|z|=r} \mathfrak{Re}\left\{1+\frac{zJ''(z)}{J'(z)}\right\}
    \geq\frac{-[2(1-\xi)N+\beta M+1]r^2-[2(1-\xi)N+\beta M]r+1}{1-r^2}=:\phi(r).
  \end{equation*}
  It is easy too see that $\phi(r)>0$ if the denominator of $\phi(r)>0$.
  The remain of proof is obvious and we omit the details. Thus the proof is completed.
\end{proof}

\end{document}